\documentclass[12pt]{amsart}
\usepackage{amscd}
\usepackage{mathrsfs}
\usepackage{a4wide}
\usepackage{color}
\usepackage{amssymb,amsmath,amsthm,amsfonts,latexsym}
\usepackage[utf8x]{inputenc}
\usepackage{bbm} %this is so we can use mathbb with numbers, but using the command bbm

\usepackage{amsmath,amsfonts,amssymb,amsthm} 
\usepackage{tikz}
\usepackage{hyperref}
\usepackage[all]{xy}
%%%%

\newtheorem{theorem}{Theorem}[section]
\newtheorem{corollary}[theorem]{Corollary}

\newtheorem{claim}[theorem]{Claim}

\newtheorem{proposition}[theorem]{Proposition}
\newtheorem{definition}[theorem]{Definition}
\newtheorem{question}[theorem]{Question}

\def\leukfrac#1/#2{\leavevmode
               \kern.1em
                \raise.9ex\hbox{\the\scriptfont0 ${}_#1$}
                \hskip -1pt\kern-.1em
                /\kern-.15em\lower.10ex\hbox{\the\scriptfont0 ${}_#2$}}

\makeatletter
\def\diam{\mathop{\operator@font diam}\nolimits}
\makeatother

\title{Borel selector for hypergraphons}
\author{Jan Greb\'ik}

\begin{document}

\begin{abstract}
We show that there is a Borel way of choosing a representative of a $k$-uniform hypergraphon.
This extends the result of Orbanz and Szegedy \cite{orbanszegedy} where this was shown for graphons.
\end{abstract}

\maketitle

The limits of dense graph sequences, so called graphons, were introduced by Lovász and Szegedy in \cite{LS}.
A graphon $W$ is a symmetric measurable function from $[0,1]^2$ to $[0,1]$ i.e. an element of $L^\infty([0,1]^2,\lambda)$ where $\lambda$ is the Lebesgue measure.
Note that the space of all such functions $\mathcal{W}^2_0$ is $weak^*$-closed and bounded in $L^\infty([0,1]^2,\lambda)$ i.e. compact in the $weak^*$ topology, but there are several reasons why the $weak^*$ topology turns out not to be the right notion here.
Instead of that one considers the notion of cut distance $\delta_\Box$.
This is a pseudometric on the space of all graphons $\mathcal{W}^2_0$ and after making the 
quotient we obtain a compact metric space $(\tilde{\mathcal{W}}^2_0,\delta_\Box)$ (see \cite{LS}).
The selection problem of Sourav Chatterjee that is mentioned in \cite{NPS} asks if there is 
a map $S:(\tilde{\mathcal{W}}^2_0,\delta_\Box)\to \mathcal{W}^2_0$ such that $S(\tilde{W})\in \tilde{W}$ and $S$ is measurable or even continuous for some topology defined on $\mathcal{W}^2_0$ i.e. the $||.||_\infty$ norm, $||.||_1$ norm, the $weak^*$ topology or the cut norm.
It was shown in \cite{orbanszegedy} that there is such a Borel selector (or lifting) for graphons.
In this note we show that Borel selection exists also in the case of $k$-uniform hypergraphons, an analogue of graphons for $k$-uniform hypergraphs.
One of the differences between graphons and hypergraphons is the lack of a good notion of a cut distance.
Our argument is a bit longer than the argument from \cite{orbanszegedy}, where a Theorem about liftings of set-valued maps is used as a black box, but we construct the Borel selector explicitly using only elementary arguments.
We note that after having our result one can easily verify the assumptions on the set valued map as in \cite{orbanszegedy} and get the existence of a sequence of selectors which satisfy the same properties as in \cite{orbanszegedy}.
The question about continuous such selector remains open.

\section{Introduction}

Let $[n]=\{1,2,...,n\}$.
Let $A$ be a finite set then denote as $r(A,k)$ the set of all nonempty subsets of $A$ of size at most $k$.
We write $r(A)$ for $r(A,|A|)$ and $r_<(A)$ for $r(A,|A|-1)$.
A finite \emph{k-uniform hypergraph} $G=(V(G),E(G))$ on a set $V(G)$ is some subset $E(G)\subseteq V^{[k]}$ that is symmetric i.e. $(x_1,...x_k)\in E(V)$ if and only if $(x_{\sigma(1)},...,x_{\sigma(n)})\in E(V)$ for every $\sigma\in S_k$.
Let $F,H$ be two finite $k$-uniform hypergraphs then we define 
$$t(F,H)=\frac{\operatorname{hom}(F,H)}{|V(H)|^{|V(F)|}}.$$
We say that a sequence $\{G_n\}_{n<\omega}$ of finite $k$-uniform hypergraphs is convergent if $t(F,G_n)$ is convergent for every finite $k$-uniform hypergraph $F$.

The limit objects for such a converging sequences are certain symmetric measurable subsets of $[0,1]^{r([k])}$ that are called $k$-uniform hypergraphons.
Here symmetric means that the set is invariant under change of coordinates where the bijection between coordinates is induced by some $\sigma\in S_k$ i.e. $\sigma$ induces a permutation $\tilde{\sigma}$ of $r([k])$. 
Let $W$ be a $k$-uniform hypergraphon and $F$ a finite $k$-uniform hypergraph then we define the homomorphism density as
$$t(F,W)=\int_{[0,1]^{r(V(F),k)}}\prod_{A\in E(F)}W(x_{r(A)})dx.$$

\begin{definition}
Let $\mathcal{W}^k_0$ be the set of all $k$-uniform hypergraphons.
\end{definition}

We consider the metric $d_1$ induced by the $||.||_1$ norm on $\mathcal{W}^k_0$ i.e. $d_1(U,W)=||U-W||_1$.
Note that under this norm is $\mathcal{W}^k_0$ complete separable metric space, i.e. Polish.

\begin{claim}
The metric space $(\mathcal{W}^k_0,d_1)$ is complete.
\end{claim}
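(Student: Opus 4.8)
The plan is to show that $\mathcal{W}^k_0$ is a closed subset of the Banach space $L^1([0,1]^{r([k])}, \lambda)$, from which completeness follows immediately since closed subsets of complete metric spaces are complete. A $k$-uniform hypergraphon is, by definition, (the indicator function of) a symmetric measurable subset of $[0,1]^{r([k])}$; equivalently, it is a measurable function $W \colon [0,1]^{r([k])} \to \{0,1\}$ that is invariant under the action of $S_k$ on coordinates. So I must check that this set of functions is $\|\cdot\|_1$-closed.

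First I would take a sequence $\{W_n\}_{n<\omega}$ in $\mathcal{W}^k_0$ that is $d_1$-Cauchy. Since $L^1$ is complete, there is $W \in L^1([0,1]^{r([k])})$ with $\|W_n - W\|_1 \to 0$. It remains to verify that $W$ (after modification on a null set) is again a $\{0,1\}$-valued symmetric function. For the $\{0,1\}$-valued part, I would pass to a subsequence converging to $W$ almost everywhere; since each $W_n$ takes values in $\{0,1\}$, the pointwise a.e. limit also takes values in $\{0,1\}$ a.e., so $W$ agrees a.e. with the indicator of a measurable set. For symmetry, fix $\sigma \in S_k$ and let $\tilde\sigma$ be the induced permutation of $r([k])$; since each $W_n$ satisfies $W_n \circ \tilde\sigma = W_n$ and the map $U \mapsto U \circ \tilde\sigma$ is a $\|\cdot\|_1$-isometry of $L^1$ (as $\tilde\sigma$ is a measure-preserving coordinate permutation), we get $\|W \circ \tilde\sigma - W\|_1 = \lim_n \|W_n \circ \tilde\sigma - W_n\|_1 = 0$, so $W \circ \tilde\sigma = W$ almost everywhere. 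Intersecting over the finitely many $\sigma \in S_k$, $W$ can be taken genuinely symmetric. Hence $W \in \mathcal{W}^k_0$ and $W_n \to W$ in $d_1$.

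I do not expect any serious obstacle here; the only mild subtlety is the standard one that "symmetric" and "$\{0,1\}$-valued" are conditions defined only up to null sets, so one must be slightly careful to choose a genuine representative, which is handled by the a.e.-convergent subsequence argument and by taking a common representative that is symmetric on the nose. One could alternatively phrase the whole argument without passing to a subsequence by noting that both $\{U : U = U \circ \tilde\sigma \text{ a.e.}\}$ and $\{U : U \in \{0,1\} \text{ a.e.}\}$ are $\|\cdot\|_1$-closed in $L^1$ (the latter because $\|U\|_1 - \|U - \mathbf{1}_{\{U \geq 1/2\}}\|_1$-type estimates, or simply because $U \mapsto \int U(1-U)$ is $\|\cdot\|_1$-continuous on bounded sets and vanishes exactly on $\{0,1\}$-valued functions), and $\mathcal{W}^k_0$ is the intersection of these closed sets with the $\|\cdot\|_\infty$-ball of radius $1$.
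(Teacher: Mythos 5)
Your proof is correct and follows essentially the same approach as the paper: use completeness of $L^1$ to obtain a limit $W$, then pass to an almost everywhere convergent subsequence to verify that $W$ inherits the defining properties (symmetry, $\{0,1\}$-valuedness) of a hypergraphon. Your version is more explicit about checking those properties and about the null-set bookkeeping, but the core argument is identical to the one in the paper.
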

\begin{proof}
Assume that we have a Cauchy sequence $\{W_n\}_{n<\omega}\subseteq \mathcal{W}^k_0$.
Since $W_n\in L_1([0,1]^{r([k])},\lambda)$ for every $n< \omega$ there must be a limit of this sequence $W\in L_1$.
Since there is a subsequence of $\{W_n\}_{n<\omega}$ that converges pointwise almost everywhere to $W$ we see that $W$ is also in $\mathcal{W}^k_0$.
\end{proof}

We describe a group $G$ that acts on $\mathcal{W}^k_0$ continuously, the definition is from \cite{elekszegedy}.
First recall that the group $H$ of all measure preserving bijections of $([0,1]^{r([k])},\lambda)$ when equipped with the weak topology is a Polish group.
The weak topology is described as follows, a basic open neighborhood of $g\in H$ is given by a measurable $A\subseteq [0,1]^{r([k])}$ and $\epsilon>0$ and is defined as $\{h\in H:\lambda(gA\triangle hA)<\epsilon\}$.
Let $S\in r([k])$, we define a projection $\pi_S:[0,1]^{r([k])}\to[0,1]^{r(S)}$.
Let $\mathcal{A}_S$ be the $\sigma-$algebra on $[0,1]^{r([k])}$ that is the pull-back of the $\sigma-$algebra on $[0,1]^{r(S)}$ via $\pi_S$.
Define also $\mathcal{A}^*_S$ to be the $\sigma-$algebra generated by 
$$\bigcup_{T\subseteq S}\mathcal{A}_T.$$

\begin{definition}
We say that $\psi:[0,1]^{r([k])}\to [0,1]^{r([k])}$ is a structure preserving map if
\begin{itemize}
\item $\psi$ is measure preserving,
\item for every $\sigma\in S_k$ we have that $\tilde{\sigma} \psi=\psi\tilde{\sigma}$,
\item for every $S\in r([k])$ is $\psi^{-1}(\mathcal{A}_S)\subseteq \mathcal{A}_S$,
\item for every $S\in r([k])$, every measurable $A\subseteq [0,1]$ and $A^*\subseteq [0,1]^{r([k])}$, where $x\in A^*$ if and only if $x(S)\in A$, is $\psi^{-1}(A^*)$ independent of $\mathcal{A}^*_S$.
\end{itemize}
\end{definition}

We define $G$ to be the collection of all $g\in H$ that are structure preserving.

\begin{proposition}
The group $G$ is a closed subgroup of $H$ and the natural action of $G$ on $(\mathcal{W}^k_0,d_1)$, given by $x\in gW$ if and only if $g^{-1}(x)\in W$, is continuous.
\end{proposition}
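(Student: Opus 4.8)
The plan is to verify the two assertions separately: first that $G$ is closed in $H$, and then that the action is continuous. For closedness, I would examine each of the four defining conditions of a structure preserving map and show that the set of $g$ satisfying it is closed in the weak topology. The measure preserving condition is automatic since every element of $H$ is measure preserving. For the equivariance condition $\tilde\sigma g = g\tilde\sigma$ (for each fixed $\sigma\in S_k$), I would note that both $g\mapsto \tilde\sigma g$ and $g\mapsto g\tilde\sigma$ are continuous on $H$ (left and right translation by the fixed element $\tilde\sigma\in H$ are homeomorphisms for the weak topology), so the equalizer is closed; intersecting over the finitely many $\sigma\in S_k$ keeps it closed. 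For the condition $g^{-1}(\mathcal A_S)\subseteq\mathcal A_S$, I would reformulate it measure-theoretically: it is equivalent to saying that for every $A\in\mathcal A_S$, the set $g^{-1}(A)$ lies (mod null sets) in $\mathcal A_S$, and since $\mathcal A_S$ is a complete sub-$\sigma$-algebra, this can be tested on a countable generating algebra and expressed as $\lambda\bigl(g^{-1}(A)\,\triangle\, B\bigr)=0$ for some $B\in\mathcal A_S$; using that $B$ can be taken to be the $\mathcal A_S$-measurable "projection" of $g^{-1}(A)$, this becomes a closed condition in $g$ because $g\mapsto \lambda(g^{-1}(A)\triangle C)$ is continuous for fixed measurable $C$. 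The last (independence) condition is similarly a countable family of equations $\lambda\bigl(g^{-1}(A^*)\cap D\bigr)=\lambda\bigl(g^{-1}(A^*)\bigr)\lambda(D)$ ranging over $D$ in a countable generating algebra of $\mathcal A^*_S$ and $A$ in a countable generating family, each of which is closed in $g$ by the same continuity. Taking the (countable) intersection over all $S\in r([k])$ and all the test sets yields that $G$ is closed.

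For the continuity of the action, I first need to check it is well defined, i.e. that $gW\in\mathcal W^k_0$ whenever $g\in G$ and $W\in\mathcal W^k_0$: measurability of $gW$ is clear since $g$ is a measurable bijection, and symmetry of $gW$ under each $\tilde\sigma$ follows from the equivariance condition $\tilde\sigma g=g\tilde\sigma$. Then I would fix the obvious fact that for a single fixed $g\in H$ the map $W\mapsto gW$ is an isometry of $(\mathcal W^k_0,d_1)$, since $\|{\mathbbm 1}_{gU}-{\mathbbm 1}_{gW}\|_1=\|{\mathbbm 1}_{g^{-1}(\,\cdot\,)\in U}-{\mathbbm 1}_{g^{-1}(\,\cdot\,)\in W}\|_1=\lambda\bigl(g^{-1}(U\triangle W)\bigr)=\lambda(U\triangle W)$ by measure preservation. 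So the only real content is joint continuity, and by the isometry-in-$W$ observation together with a standard $\varepsilon/2$ argument it suffices to show: for each fixed $W\in\mathcal W^k_0$, the orbit map $g\mapsto gW$ is continuous from $G$ (with the weak topology) to $(\mathcal W^k_0,d_1)$. By density of simple functions (finite unions of boxes) in $L^1$ and the isometry argument again, it is enough to treat $W$ of the form ${\mathbbm 1}_A$ for $A$ a finite Boolean combination of measurable rectangles; then $d_1(gW,hW)=\lambda(gA\triangle hA)$, which is exactly the quantity controlled by the basic weak neighborhoods of the identity. This gives continuity of the orbit map, hence joint continuity of the action.

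The main obstacle, I expect, is the bookkeeping in the closedness proof — specifically, making precise that each of the $\sigma$-algebra conditions (invariance and independence) can be encoded by a countable family of equations of the form $\lambda(g^{-1}(E)\triangle C)=0$ or $\lambda(g^{-1}(E)\cap D)=\lambda(g^{-1}(E))\lambda(D)$ with $E,C,D$ ranging over fixed countable generating families, and that the witnessing set (e.g. the conditional-expectation representative of $g^{-1}(A)$ in $\mathcal A_S$) does not need to be quantified over in a way that destroys closedness. Once that reduction is in place, everything reduces to the single elementary continuity fact that $g\mapsto\lambda(gX\triangle Y)$ is continuous on $H$ for fixed measurable $X,Y$, which also drives the continuity of the action. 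Nothing here requires more than these observations plus the separability of the measure algebra of $([0,1]^{r([k])},\lambda)$.
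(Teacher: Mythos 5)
Your proposal is correct and proves the same two assertions in the same overall order, but two of your steps take genuinely different routes from the paper's. For the equivariance condition $\tilde\sigma g = g\tilde\sigma$, you observe that left and right translation by the fixed element $\tilde\sigma$ are homeomorphisms of $H$, so the equalizer is closed; the paper instead argues by contradiction, extracting a positive-measure set $A_0$ on which $\tilde\sigma g(A_0)$ and $g\tilde\sigma(A_0)$ are disjoint and then passing to the limit along $g_n$ to contradict this. Your formulation is cleaner and avoids the (slightly garbled) limit computation in the paper. More substantively, for continuity of the action you avoid the paper's appeal to the theorem (cited from Gao) that a separately continuous action of a Polish group on a Polish space is automatically jointly continuous. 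Instead you note that each $g$ acts as a $d_1$-isometry and then run the standard $\varepsilon/2$ estimate $d_1(g_nW_n,gW)\le d_1(W_n,W)+d_1(g_nW,gW)$, reducing everything to continuity of the orbit map $g\mapsto gW$, which is immediate from the definition of the weak topology. This buys a more elementary and self-contained argument; the paper's version is shorter on the page but imports a nontrivial black box. One small inefficiency on your side: the reduction of $W$ to indicators of finite Boolean combinations of rectangles is unnecessary, since the basic weak neighborhoods of $H$ already control $\lambda(gA\triangle hA)$ for \emph{every} measurable $A$, and a hypergraphon $W$ here is already a measurable set, so you can take $A=W$ directly. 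Also, in your treatment of the third condition, the phrase about taking $B$ to be the ``$\mathcal A_S$-measurable projection of $g^{-1}(A)$'' is a bit loose since the conditional expectation of $\mathbbm 1_{g^{-1}(A)}$ need not be an indicator; the cleaner closed condition is $\int\lvert \mathbbm 1_{g^{-1}(A)}-E[\mathbbm 1_{g^{-1}(A)}\mid\mathcal A_S]\rvert\,d\lambda=0$, which is continuous in $g$. Neither of these affects correctness.
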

\begin{proof}
Assume that $\{g_n\}_{n<\omega}\in G$ and $g_n\to g\in H$.
We show that all the conditions in the definition of structure preserving map are closed in the weak topology.

The first one is clear.
Assume that the second does not hold then the measurable set $A=\{x:\tilde{\sigma}g(x)\not=g\tilde{\sigma}(x)\}$ has positive measure.
Therefore there is a set $A_0\subseteq A$ with a positive measure such that $\tilde{\sigma}g(A_0)\cap g\tilde{\sigma}(A_0)=\emptyset$.
But also $g_n\tilde{\sigma}(A_0)=\tilde{\sigma}g_n(A_0)\to \tilde{\sigma}g_n(A_0)$ which clearly leads to a contradiction.

Note that the function $d(A,B)=\lambda(A\triangle B)$ is a pseudometric on the $\sigma-$algebra of measurable subsets of $[0,1]^{r([k])}$.
The third condition can be evaluated for every $A\in \mathcal{A}_S$ individually and follows easily since $\mathcal{A}_S$ is closed in the metric $d$.

The fourth condition can also be checked individually for every pair of the form $A^*$ and $B\in \mathcal{A}^*_S$ and follows easily from the definitions.

It remains to prove that the action is continuous.
We use here the standard fact from the descriptive set theory (see \cite{Gao}) that a action of a Polish group on a Polish space is continuous if and only if it is separately continuous.
Let $W\in\mathcal{W}^k_0$ and $g\in G$ then $gW\in \mathcal{W}^k_0$ which is guaranteed by the second condition in the definition of structure preserving map.
Let $g_n\to g$ and $W$ are given.
Then since $W$ is a measurable subset of $[0,1]^{r([k])}$ we have from the definition of the weak topology on $G$ that $g_nW\to gW$.
Assume on the other hand that $W_n\to W$ in $d_1$ and $g\in G$ are given.
Then $gW_n\to gW$ because in fact the map $g.\_:\mathcal{W}^k_0\to \mathcal{W}^k_0$ is even an isometry for $d_1$.
\end{proof}

We follow \cite{elekszegedy} and define several pseudometrics on $\mathcal{W}^k_0$.
Recall that the pull-back $W^\psi$ of $W\in \mathcal{W}^k_0$ under structural preserving map $\psi$ is defined as $W^{\psi}=\psi^{-1}(W)$.

\begin{definition}
Let $U,W\in \mathcal{W}^k_0$ and define
\begin{itemize}
\item $\delta_w(U,W)$ is the minimal $\alpha\in \mathbb{R}^+_0$ such that $|t(F,U)-t(F,W)|\le |E(F)|\alpha$ for every finite $k$-uniform hypergraph $F$,
\item $\delta_1(U,W)=\inf_{\psi,\varphi}\{d_1(U^{\varphi},W^{\psi})\}$ where $\psi,\varphi$ are strucutural preserving maps,
\item $\delta(U,W)=\sum_{F_n}\frac{|t(F_n,U)-t(F_n,W)|}{2^n}$ where $\{F_n\}_{n<\omega}$ is some fixed enumeration of all finite $k$-uniform hypergraphf.
\end{itemize}
\end{definition}

The basic properties of this pseudometrics from Section 4 of \cite{elekszegedy} are summarized in the next theorem.

\begin{theorem}[\cite{elekszegedy}]\label{section4}
Let $U,W\in \mathcal{W}^k_0$ then
\begin{itemize}
\item $\delta_w,\delta$ are pseudometrics,
\item $|t(F,U)-t(F,W)|\le |E(F)|d_1(U,W)$,
\item for any structure preserving map $\psi$ is $\delta_w(U,U^\psi)=0$,
\item $\delta_w(U,W)=0$ if and only if there are two structure preserving maps $\psi,\varphi$ such that $\lambda(U^{\psi}\triangle W^{\varphi})=0$,
\item $\delta_w(U,W)=0$ if and only if $\delta_1(U,W)=0$,
\item for every $\epsilon>0$ there is a $g\in G$ such that $d_1(U,gW)\le \delta_1(U,W)+\epsilon$.
\end{itemize}
\end{theorem}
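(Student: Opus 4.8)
The plan is to order the six items from soft to hard and to lean throughout on the Proposition above, that $G$ acts on $(\mathcal{W}^k_0,d_1)$ by isometries. That $\delta$ is a pseudometric is immediate, since it is a convergent weighted sum $\sum_n 2^{-n}|t(F_n,U)-t(F_n,W)|$ of pseudometrics, each coming from the real functional $t(F_n,\cdot)$. For $\delta_w$, the set of admissible $\alpha$ is $\bigcap_F\{\alpha\ge 0: |t(F,U)-t(F,W)|\le |E(F)|\alpha\}$, a closed upward ray, so the ``minimal $\alpha$'' exists and equals $\sup_F|t(F,U)-t(F,W)|/|E(F)|$; in this form symmetry, vanishing on the diagonal, and the triangle inequality are all transparent. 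The counting inequality $|t(F,U)-t(F,W)|\le|E(F)|\,d_1(U,W)$ I would prove by the usual telescoping over an enumeration $A_1,\dots,A_m$ of $E(F)$: the $i$-th difference equals
$$\int\Big(\prod_{j<i}W(x_{r(A_j)})\Big)\big(U(x_{r(A_i)})-W(x_{r(A_i)})\big)\Big(\prod_{j>i}U(x_{r(A_j)})\Big)dx,$$
and since every factor except $U-W$ lies in $[0,1]$, integrating out the coordinates outside $r(A_i)$ by Fubini and using that the identification $A_i\cong[k]$ is measure preserving bounds it by $\|U-W\|_1=d_1(U,W)$; summing the $m$ terms gives the inequality.

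The third item, $\delta_w(U,U^\psi)=0$ for every structure preserving $\psi$, is the one genuinely ``local'' computation, and it is equivalent to $t(F,W^\psi)=t(F,W)$ for all finite $k$-uniform $F$. Writing $V(F)=[n]$, the integral $t(F,W)$ uses, for each edge $A$, only the block $(x_S)_{S\in r(A)}$ via the identification $A\cong[k]$. I would assemble from $\psi$ a single measure preserving transformation $\Psi$ of $[0,1]^{r([n])}$ acting on each such block the way $\psi$ acts on $r([k])$; the three non-measure-preservation conditions on $\psi$ are exactly what makes this possible — the localization $\psi^{-1}(\mathcal{A}_S)\subseteq\mathcal{A}_S$ makes the block actions agree on overlaps $r(A)\cap r(A')$, the $S_k$-equivariance makes the choice of identification $A\cong[k]$ irrelevant, and the independence of $\psi^{-1}(A^*)$ from $\mathcal{A}^*_S$ keeps the ``top'' coordinates uniform conditionally on everything below, so that $\Psi_*\lambda=\lambda$. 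Organising the construction of $\Psi$ by induction on $|S|$, peeling off one level of coordinates at a time (the Elek--Szegedy decomposition \cite{elekszegedy}), and applying change of variables under $\Psi$ yields $t(F,W)=t(F,W^\psi)$.

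Items 4, 5, and 6 are then mostly bookkeeping around item 3. In item 4 the ``$\Leftarrow$'' direction is formal: if $\lambda(U^\psi\triangle W^\varphi)=0$ then $t(F,U)=t(F,U^\psi)=t(F,W^\varphi)=t(F,W)$ for all $F$, so $\delta_w(U,W)=0$. Given item 4, the ``$\Rightarrow$'' of item 5 is immediate (the $\psi,\varphi$ it supplies give $d_1(U^\psi,W^\varphi)=0$, hence $\delta_1(U,W)=0$), and ``$\Leftarrow$'' of item 5 follows from the counting inequality and item 3: if $d_1(U^{\varphi_n},W^{\psi_n})\to0$ then $|t(F,U)-t(F,W)|=|t(F,U^{\varphi_n})-t(F,W^{\psi_n})|\le|E(F)|\,d_1(U^{\varphi_n},W^{\psi_n})\to0$. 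For item 6, choose structure preserving $\varphi,\psi$ with $d_1(U^\varphi,W^\psi)<\delta_1(U,W)+\epsilon/2$; since measure preserving maps of a standard probability space are weakly approximated by measure preserving \emph{bijections}, and — as in the proof of the Proposition — the defining conditions of ``structure preserving'' are weakly closed, one may spend another $\epsilon/2$ in $d_1$ to take $\varphi,\psi\in G$; then $g:=\varphi\psi^{-1}\in G$, and since the $G$-action is isometric for $d_1$, $d_1(U,gW)=d_1(U^\varphi,W^\psi)\le\delta_1(U,W)+\epsilon$.

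The main obstacle is the ``$\Rightarrow$'' direction of item 4: that $U$ and $W$ with identical homomorphism densities must be weakly isomorphic, i.e.\ related up to a null set by a pair of structure preserving pull-backs. This is the hypergraph analogue of the graphon uniqueness theorem (cf.\ \cite{LS}) and is genuinely hard; the standard route — which I would invoke from \cite{elekszegedy} rather than reproduce — realises $U$ and $W$ as sampling limits of a single ``random-free'' object on an ultrapower (or Benjamini--Schramm-type) probability space, and then transports a common representative back to $[0,1]^{r([k])}$, using that this space is Lebesgue and that its measure algebra is homogeneous, to manufacture the required structure preserving maps. The only other non-formal point, the approximation of structure preserving maps by elements of $G$ used for item 6, is by contrast a soft Lebesgue-space fact combined with the closedness observations already made in the proof of the Proposition.
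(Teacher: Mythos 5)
The paper does not prove Theorem~\ref{section4}; it states it as a citation from Section~4 of \cite{elekszegedy}, so there is no proof in the paper against which to compare. Reading your sketch on its own terms, the arguments for items~1, 2, and~5 are correct, the sketch of item~3 (building a global measure preserving $\Psi$ on $[0,1]^{r([n])}$ from the layered action of $\psi$, using the three non-measure-preservation conditions to get consistency on overlaps, $S_k$-invariance, and $\Psi_*\lambda=\lambda$) is the right idea even if the induction on $|S|$ is glossed over, and deferring the ``$\Rightarrow$'' of item~4 to \cite{elekszegedy} is exactly what the paper does implicitly by citing.

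There is, however, a genuine gap in your argument for item~6. After choosing structure preserving $\varphi,\psi$ with $d_1(U^\varphi,W^\psi)<\delta_1(U,W)+\epsilon/2$, you assert one may replace them by elements of $G$ at an additional $d_1$-cost of $\epsilon/2$, on the grounds that (a) measure preserving maps of a standard probability space are weakly approximated by measure preserving bijections, and (b) the structure preserving conditions are weakly closed, as in the Proposition. These two facts do not combine to give what you need. The approximating bijections supplied by (a) have no reason to be structure preserving, and (b) runs in the wrong direction: closedness of $G$ in $H$ says that limits of structure preserving maps are structure preserving, not that an arbitrary structure preserving map is a weak limit of structure preserving \emph{bijections}. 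What is actually required is a density statement internal to the class of structure preserving maps, which must exploit the independent layered product structure of $[0,1]^{r([k])}$ (approximating coordinate by coordinate along $r([k])$ using the independence condition) and is not a consequence of the Proposition. As written the step is unsupported; it is precisely the kind of content one must import from \cite{elekszegedy} rather than deduce from the soft closedness observations already in the paper. Your algebraic reduction $g:=\varphi\psi^{-1}$ and the use of isometry of the $G$-action are fine once $\varphi,\psi\in G$ are in hand, but that is the part that needs a real argument.
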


We prove next some other properties that are not implicitly stated in \cite{elekszegedy} or the proof is omitted there but they follow easily using Theorem\ref{section4}.
Fix some enumeration of some countable dense subset of $G$ i.e. some $\{g_m\}_{m<\omega}\subseteq G$.

\begin{proposition}
Let $U,W\in \mathcal{W}^k_0$. Then
\begin{itemize}
\item $\delta(U,W)=0$ if and only if $\delta_1(U,W)=0$,
\item for every $\epsilon>0$ there is $m<\omega$ such that $d_1(U,g_mW)\le \delta_1(U,W)+\epsilon$,
\item $\delta_1$ is a Borel pseudoemtric i.e. it is a Borel function $(\mathcal{W}^k_0,d_1)\times(\mathcal{W}^k_0,d_1)\to \mathbb{R}^+_0$.
\end{itemize}
\end{proposition}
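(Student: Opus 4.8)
The plan is to establish the three bullet points in order, using Theorem~\ref{section4} as the main tool.

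\textbf{First bullet.} I would prove $\delta(U,W)=0 \iff \delta_1(U,W)=0$ by routing through $\delta_w$. For the forward direction, if $\delta(U,W)=0$ then $t(F_n,U)=t(F_n,W)$ for every $n$, hence $t(F,U)=t(F,W)$ for all finite $k$-uniform hypergraphs $F$; this forces $\delta_w(U,W)=0$ (taking $\alpha=0$ in the definition), and then the fifth bullet of Theorem~\ref{section4} gives $\delta_1(U,W)=0$. For the reverse direction, if $\delta_1(U,W)=0$ then $\delta_w(U,W)=0$ by the same bullet, so $|t(F,U)-t(F,W)|\le |E(F)|\cdot 0=0$ for every $F$, and summing over the enumeration $\{F_n\}$ gives $\delta(U,W)=0$.

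\textbf{Second bullet.} Given $\epsilon>0$, the last bullet of Theorem~\ref{section4} produces $g\in G$ with $d_1(U,gW)\le\delta_1(U,W)+\epsilon/2$. Since $\{g_m\}_{m<\omega}$ is dense in $G$ and $W\mapsto hW$ depends continuously (in fact the relevant estimate is that $d_1(hW,gW)=\|h^{-1}(W)\triangle g^{-1}(W)\|_1=\lambda(h^{-1}(W)\triangle g^{-1}(W))$, which is small once $h$ is weak-topology-close to $g$ relative to the measurable set $W$), I can pick $m$ with $d_1(g_mW,gW)\le\epsilon/2$, and then the triangle inequality for $d_1$ gives $d_1(U,g_mW)\le\delta_1(U,W)+\epsilon$.

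\textbf{Third bullet.} This is the main obstacle and where the second bullet pays off. The point is that the second bullet lets us replace the infimum over the (non-Polish-to-quantify-over) family of all structure preserving maps in the definition of $\delta_1$ by an infimum over the countable dense set $\{g_m\}$: concretely I claim $\delta_1(U,W)=\inf_{m<\omega} d_1(U,g_mW)$. The inequality ``$\le$'' holds because each $d_1(U,g_mW)\ge\delta_1(U,g_mW)=\delta_1(U,W)$ using that $\delta_w(W,g_mW)=0$ (third bullet of Theorem~\ref{section4}) hence $\delta_1(W,g_mW)=0$, combined with the triangle inequality for the pseudometric $\delta_1$; the inequality ``$\ge$'' is exactly the second bullet. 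Now for each fixed $m$ the map $(U,W)\mapsto d_1(U,g_mW)$ is continuous $(\mathcal{W}^k_0,d_1)\times(\mathcal{W}^k_0,d_1)\to\mathbb{R}^+_0$, since $d_1$ is continuous in both arguments and $W\mapsto g_mW$ is an isometry. Therefore $\delta_1=\inf_m d_1(U,g_mW)$ is a countable infimum of continuous functions, hence upper semicontinuous, hence Borel. The one subtlety to verify carefully is the continuity estimate for $h\mapsto hW$ in the weak topology used in the second and third bullets — here the basic open neighborhoods of the weak topology on $H$ are defined precisely by measurable sets and $\epsilon$'s, so $\{h : \lambda(h^{-1}W\triangle g^{-1}W)<\epsilon\}$ is a weak-neighborhood of $g$, which is what we need; this is routine but should be spelled out.
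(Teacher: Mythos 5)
Your first two bullets match the paper's proof closely, and your idea for the third — write $\delta_1(U,W)=\inf_{m}d_1(U,g_mW)$ and conclude Borelness as a countable infimum of continuous functions — is essentially the same mechanism the paper uses, just stated more cleanly than the paper's ad hoc description of $\delta_1^{-1}([r,r+\epsilon))$.

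However, there is a genuine gap in your treatment of the third bullet. The claim is that $\delta_1$ is a Borel \emph{pseudometric}, and the paper spends the first half of its proof of this item establishing precisely the triangle inequality for $\delta_1$: it takes near-optimal structure preserving maps $\psi_0,\varphi_0,\psi_1,\varphi_1$ for the pairs $(U,W)$ and $(V,W)$, uses the fourth bullet of Theorem~\ref{section4} to produce $\alpha,\beta$ with $\lambda(W^{\varphi_0\alpha}\triangle W^{\varphi_1\beta})=0$, and deduces $d_1(U^{\psi_0\alpha},V^{\psi_1\beta})\le \delta_1(U,W)+\delta_1(W,V)+2\epsilon$. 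You never prove this, and worse, your argument for the ``$\le$'' direction of the identity $\delta_1(U,W)=\inf_m d_1(U,g_mW)$ \emph{assumes} the triangle inequality for $\delta_1$ (``combined with the triangle inequality for the pseudometric $\delta_1$''), so as written it is circular. The step can be rescued without the triangle inequality: since $g_m\in G$ is an invertible structure preserving map, $(g_mW)^{\psi}=W^{g_m^{-1}\psi}$ and $\psi\mapsto g_m^{-1}\psi$ is a bijection of the set of structure preserving maps, giving $\delta_1(U,g_mW)=\delta_1(U,W)$ directly from the infimum definition. But you still owe the reader a proof that $\delta_1$ satisfies the triangle inequality (the other pseudometric axioms are trivial), and for that the paper's argument via Theorem~\ref{section4}'s fourth bullet is what you are missing.
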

\begin{proof}
The first part follows from Theorem\ref{section4} since if $\delta_1(U,W)=0$ then $\delta_w(U,W)=0$ and from the definition of $\delta_w$ we have that $|t(F,U)-t(F,W)|\le |E(F)|0$ i.e. $\delta(U,W)=0$.
On the other hand if $\delta(U,W)=0$ then again from the definition we have that $\delta_w(U,W)=0$ and Theorem\ref{section4} gives us $\delta_1(U,W)=0$.

For the second part use Theorem\ref{section4} to produce such $g\in G$ and then use the continuity of the action of $G$ on $\mathcal{W}^k_0$ to get the desired $m<\omega$.

It remains to prove the third part.
To see that $\delta_1$ is a pseudometric take $U,W,V\in \mathcal{W}^k_0$.
Let $\epsilon>0$ be given and find structural preserving maps $\psi_0,\varphi_0,\psi_1,\varphi_1$ such that $d_1(U^{\psi_0},W^{\varphi_0})\le \delta_1(U,W)+\epsilon$ and $d_1(V^{\psi_1},W^{\varphi_1})\le \delta_1(V,W)+\epsilon$.
We have that $\delta_1(W^{\varphi_0},W^{\varphi_1})=0$ and therefore there is structural preserving map $\alpha,  \beta$ such that $\lambda (W^{\varphi_0\alpha}\triangle W^{\varphi_1\beta})=0$.
It is easy to see that now $d_1(U^{\psi_0\alpha},V^{\psi_1\beta})\le \delta_1(U,W)+\delta_1(W,V)+2\epsilon$.

To see that this pseudometric is Borel note that for a fixed $r\in \mathbb{R}^+_0$ and $\epsilon>0$ is the set 
$$\{(U,W)\in \mathcal{W}^k_0\times \mathcal{W}^k_0:\exists m<\omega \ d_1(U,g_mW)\in [r,r+\epsilon)  \ \& \  \forall k<\omega \  d_1(U,g_kW)\not<r\}$$
Borel because of the continuity of the action.
\end{proof}

Note that we have actually proved that the quotients of all of these pseudometrics have the same underlying set and we denote it as $\tilde{\mathcal{W}}^k_0=\mathcal{W}^k_0/\delta=\mathcal{W}^k_0/\delta_1$.
We also write $\tilde{W}$ to be the equivalence class of $W\in \mathcal{W}^k_0$ in $\tilde{\mathcal{W}}^k_0$.
Note however that the metrics are different.
It is a fundamental result see\cite{elekszegedy} that $(\tilde{\mathcal{W}}^k_0,\delta)$ is compact Polish space.

\begin{proposition}
The metric $\delta_1$ is complete on $\tilde{\mathcal{W}}^k_0$ and $\operatorname{Borel}(\tilde{\mathcal{W}}^k_0,\delta)\subseteq \operatorname{Borel}(\tilde{\mathcal{W}}^k_0,\delta_1)$.
\end{proposition}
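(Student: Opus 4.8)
The plan is to prove the two assertions separately, starting with the inclusion of Borel $\sigma$-algebras, which is the easier one. First I would observe that every homomorphism density $t(F,\cdot)$ descends to a well-defined real-valued function on $\tilde{\mathcal{W}}^k_0$: if $\delta_1(U,W)=0$ then $\delta_w(U,W)=0$ by Theorem~\ref{section4}, and the definition of $\delta_w$ forces $t(F,U)=t(F,W)$. Moreover this function is $\delta_1$-continuous: since $t$ is invariant under structure preserving maps (Theorem~\ref{section4}) and $|t(F,U)-t(F,W)|\le |E(F)|\, d_1(U,W)$, taking the infimum over structure preserving $\psi,\varphi$ gives $|t(F,U)-t(F,W)|\le |E(F)|\,\delta_1(U,W)$. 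Consequently, for each fixed $\tilde V$, the map $\tilde W\mapsto\delta(\tilde W,\tilde V)=\sum_n 2^{-n}|t(F_n,W)-t(F_n,V)|$ is a uniformly convergent series of $\delta_1$-continuous functions (the $n$-th term is bounded by $2^{-n}$ since all $t$-values lie in $[0,1]$), hence is itself $\delta_1$-continuous. Therefore every $\delta$-ball is $\delta_1$-open, and since arbitrary unions of $\delta_1$-open sets are $\delta_1$-open, we get $\tau_\delta\subseteq\tau_{\delta_1}$, i.e. the identity is a continuous map $(\tilde{\mathcal{W}}^k_0,\delta_1)\to(\tilde{\mathcal{W}}^k_0,\delta)$, whence $\operatorname{Borel}(\tilde{\mathcal{W}}^k_0,\delta)\subseteq\operatorname{Borel}(\tilde{\mathcal{W}}^k_0,\delta_1)$.

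For the completeness of $\delta_1$ on $\tilde{\mathcal{W}}^k_0$, I would use that a Cauchy sequence with a convergent subsequence converges, so it suffices to treat a $\delta_1$-Cauchy sequence $\{W_n\}_{n<\omega}$ satisfying $\delta_1(W_n,W_{n+1})<2^{-n}$ for all $n$ (pass to a subsequence). By the last clause of Theorem~\ref{section4}, applied with $\epsilon=2^{-n}$, there is $g_n\in G$ with $d_1(W_n,g_nW_{n+1})<2^{-n+1}$. Put $h_1=\mathrm{id}$ and $h_{n+1}=h_ng_n$; these lie in $G$ since $G$ is a group, and since $G$ acts on $(\mathcal{W}^k_0,d_1)$ by isometries we get $d_1(h_nW_n,h_{n+1}W_{n+1})=d_1(W_n,g_nW_{n+1})<2^{-n+1}$. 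Hence $\{h_nW_n\}_{n<\omega}$ is $d_1$-Cauchy, and by the Claim $(\mathcal{W}^k_0,d_1)$ is complete, so $h_nW_n\to U$ in $d_1$ for some $U\in\mathcal{W}^k_0$. Since $h_n^{-1}$ is structure preserving, $\delta_w(W_n,h_nW_n)=0$, hence $\delta_1(W_n,h_nW_n)=0$, and therefore by the triangle inequality for the pseudometric $\delta_1$ together with $\delta_1\le d_1$ we obtain $\delta_1(W_n,U)\le d_1(h_nW_n,U)\to 0$. Thus $\{\tilde W_n\}$ converges to $\tilde U$ in $\delta_1$, so the original Cauchy sequence converges and $\delta_1$ is complete. (Incidentally, $(\tilde{\mathcal{W}}^k_0,\delta_1)$ is then Polish, being also separable as the continuous image of the separable space $(\mathcal{W}^k_0,d_1)$, although this is not required here.)

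After the conceptual steps above, both arguments reduce to routine $\epsilon$-bookkeeping, and the fact that a composition of structure preserving maps is structure preserving is immediate from $G$ being a group. The step I expect to be the real obstacle is inside the completeness proof: converting the $\delta_1$-Cauchy hypothesis — which a priori only provides, for each consecutive pair, its own pair of structure preserving maps bringing the two hypergraphons $d_1$-close — into a single sequence that is genuinely $d_1$-Cauchy. The device that makes this work is to absorb the relative alignments into one running composition $h_n=g_1\cdots g_{n-1}$ of group elements, relying on the approximation of $\delta_1$ by the $G$-action from Theorem~\ref{section4}; once the aligned sequence converges in the complete space $(\mathcal{W}^k_0,d_1)$, the $G$-invariance of $\delta_1$ transports the limit back to the quotient.
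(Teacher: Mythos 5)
Your proof is correct and follows essentially the same strategy as the paper's: both parts amount to showing that the identity map $(\tilde{\mathcal{W}}^k_0,\delta_1)\to(\tilde{\mathcal{W}}^k_0,\delta)$ is continuous (you do it via open balls, the paper via sequential closedness of $\delta$-closed sets — cosmetically different but equivalent) together with completeness of $(\mathcal{W}^k_0,d_1)$. The one place you do genuinely more work is the completeness argument, where the paper simply asserts that one can ``choose representatives which form a Cauchy sequence for $d_1$'': you supply the actual mechanism, passing to a $2^{-n}$-fast subsequence, extracting approximating group elements $g_n$ via the last clause of Theorem~\ref{section4}, and absorbing them into the running composition $h_n=g_1\cdots g_{n-1}$ so that $\{h_nW_n\}$ is literally $d_1$-Cauchy — which is exactly the step the paper leaves implicit.
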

\begin{proof}
Assume that $\{\tilde{W}_n\}_{n<\omega}$ is a Cauchy sequence with respect to $\delta_1$.
Choose some representatives $\{W_n\}_{n<\omega}$ which forms a cauchy sequence for $d_1$.
We know that $\mathcal{W}^k_0$ is complete and therefore there is a limit $W\in \mathcal{W}^k_0$ of this sequence.
Note that then $\tilde{W}$ is a limit of $\{\tilde{W}_n\}_{n<\omega}$.

Let $C\subseteq \tilde{\mathcal{W}}^k_0$ be closed in $\delta$.
We show that it is also closed in $\delta_1$.
Take any sequence $\tilde{W}_n\to_{\delta_1} \tilde{W}$ where $\{\tilde{W}_n\}_{n<\omega}\subseteq C$.
Take a representatives such that $W_n\to_{d_1} W$ and observe that from Theorem\ref{section4} it follows that 
$$\left|t(F,W_n)-t(F,W) \right|\le |E(F)|d_1(W_n,W)\to 0$$
for every $k$-uniform hypergraph $F$.
We have $\tilde{W}\in C$ and that finishes the proof.
\end{proof}

\section{Selector}

We show in this section that there is a Borel map $S:(\tilde{\mathcal{W}}^k_0,\delta)\to(\mathcal{W}^k_0,d_1)$ such that $S(\tilde{W})\in \tilde{W}$.

\begin{proposition}\label{borelmap}
The quotient map $F:(\mathcal{W}^k_0,d_1)\to (\tilde{\mathcal{W}}^k_0,\delta_1)$, where $F(W)=\tilde{W}$, is continuous.
\end{proposition}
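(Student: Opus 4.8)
The plan is to show that the quotient map $F$ is continuous by proving it is distance non-increasing, which is the strongest possible form of continuity here. Specifically, I claim that $\delta_1(U,W)\le d_1(U,W)$ for all $U,W\in\mathcal{W}^k_0$, and that this inequality passes to the quotient. Once this is established, continuity of $F$ is immediate: given $\epsilon>0$, if $d_1(U,W)<\epsilon$ then $\delta_1(\tilde U,\tilde W)=\delta_1(U,W)<\epsilon$ as well.

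First I would recall that $\delta_1(U,W)=\inf_{\psi,\varphi}d_1(U^{\varphi},W^{\psi})$ where the infimum runs over all structure preserving maps $\psi,\varphi$. Since the identity map $\operatorname{id}:[0,1]^{r([k])}\to[0,1]^{r([k])}$ is trivially a structure preserving map (it is measure preserving, commutes with all $\tilde\sigma$, pulls back each $\mathcal{A}_S$ to itself, and the independence condition is vacuous), the pair $(\psi,\varphi)=(\operatorname{id},\operatorname{id})$ is admissible in the infimum. Hence $\delta_1(U,W)\le d_1(U^{\operatorname{id}},W^{\operatorname{id}})=d_1(U,W)$. This shows $\delta_1$ is dominated by $d_1$ on $\mathcal{W}^k_0$.

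Next I would check that this descends correctly to the quotient spaces. The map $F$ sends $W$ to its $\delta_1$-equivalence class $\tilde W$; since by the earlier Proposition the equivalence classes of $\delta$, $\delta_1$, and $\delta_w$ all coincide, $F$ is a well-defined map into $\tilde{\mathcal{W}}^k_0$. The metric on the target is $\delta_1$ (viewed now as a genuine metric on $\tilde{\mathcal{W}}^k_0$), and for representatives we have $\delta_1(\tilde U,\tilde W)=\delta_1(U,W)\le d_1(U,W)$. Therefore $F$ is $1$-Lipschitz from $(\mathcal{W}^k_0,d_1)$ to $(\tilde{\mathcal{W}}^k_0,\delta_1)$, and in particular continuous.

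The only point requiring any care — and the closest thing to an obstacle — is verifying that the identity is genuinely a structure preserving map, in particular that the fourth (independence) condition is satisfied: for $A^*$ determined by a coordinate in position $S$ and any $B\in\mathcal{A}^*_S$, one needs $\operatorname{id}^{-1}(A^*)=A^*$ to be independent of $\mathcal{A}^*_S$. But $A^*$ depends only on the $S$-coordinate, which is one of the generating coordinates of $\mathcal{A}_S\subseteq\mathcal{A}^*_S$ only in a degenerate sense; more precisely the coordinates of the product $[0,1]^{r([k])}$ indexed by elements of $r([k])$ are mutually independent under $\lambda$, and $\mathcal{A}^*_S$ is generated by coordinates indexed by proper subsets $T\subsetneq S$ together with $T\subseteq S$ with $T\neq S$ — so $A^*$, which depends only on the coordinate indexed by $S$ itself, is independent of $\mathcal{A}^*_S$. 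This is exactly the product-independence of distinct coordinates and requires no real work. With that remark in place the proof is complete.
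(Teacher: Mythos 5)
Your argument is correct, and it takes a genuinely different (and arguably cleaner) route than the paper. The paper computes the preimage of a basic $\delta_1$-ball directly: it shows
$$F^{-1}\bigl(\{\tilde U:\delta_1(\tilde U,\tilde W)<\epsilon\}\bigr)=\{U:\exists m<\omega\ d_1(W,g_mU)<\epsilon\},$$
which is a countable union of open sets since each $g_m$ acts as a $d_1$-isometry. That argument leans on the earlier Proposition (that the infimum defining $\delta_1$ is approximated along the fixed dense sequence $\{g_m\}\subseteq G$) and on continuity of the $G$-action. Your approach instead observes that $\delta_1\le d_1$ by plugging the identity into the infimum defining $\delta_1$, so $F$ is $1$-Lipschitz. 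This is more elementary, it does not invoke the density of $\{g_m\}$ or the group action at all, and it yields a quantitative modulus of continuity rather than merely openness of preimages. One small caveat: you are right that the only point needing care is that the identity is structure preserving, and in particular the independence condition. This works only if $\mathcal{A}^*_S$ is generated by $\bigcup_{T\subsetneq S}\mathcal{A}_T$ (proper subsets); as literally printed the paper writes $T\subseteq S$, which would force $\mathcal{A}^*_S=\mathcal{A}_S$ and make the fourth condition unsatisfiable for \emph{any} $\psi$, so this must be a typo in the source. Your explanation of why the coordinate indexed by $S$ is independent of $\mathcal{A}^*_S$ is correct in substance, though the sentence "proper subsets $T\subsetneq S$ together with $T\subseteq S$ with $T\neq S$" is redundant and should just read "nonempty proper subsets $T\subsetneq S$."
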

\begin{proof}
Let $W\in \mathcal{W}^k_0$ and $\epsilon>0$ be given.
Then 
$$F^{-1}(\{\tilde{U}:\delta_1(\tilde{U},\tilde{W})<\epsilon\})=\{U:\exists m<\omega \       d_1(W,g_mU)<\epsilon\}$$
is clearly open.
\end{proof}

\begin{corollary}\label{borelmap2}
The map $F$ is continuous for the $\delta$ metric.
\end{corollary}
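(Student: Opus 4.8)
The plan is to factor $F$ through the identity map on the quotient and invoke continuity of each piece. Write $F=\mathrm{id}\circ F$, where the first (rightmost) arrow is the map $F\colon(\mathcal{W}^k_0,d_1)\to(\tilde{\mathcal{W}}^k_0,\delta_1)$ and the second is $\mathrm{id}\colon(\tilde{\mathcal{W}}^k_0,\delta_1)\to(\tilde{\mathcal{W}}^k_0,\delta)$. Proposition~\ref{borelmap} gives that the first map is continuous. The preceding proposition shows that every $\delta$-closed subset of $\tilde{\mathcal{W}}^k_0$ is $\delta_1$-closed, which is precisely the assertion that the identity map $(\tilde{\mathcal{W}}^k_0,\delta_1)\to(\tilde{\mathcal{W}}^k_0,\delta)$ is continuous (the preimage of a closed set is closed). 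Since a composition of continuous maps is continuous, $F$ is continuous into $(\tilde{\mathcal{W}}^k_0,\delta)$.

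Alternatively, one can argue directly. As $\delta(\tilde U,\tilde W)$ depends only on the equivalence classes and equals $\delta(U,W)=\sum_n 2^{-n}\,|t(F_n,U)-t(F_n,W)|$, it suffices to bound this quantity in terms of $d_1(U,W)$ near a fixed $W$. Given $\epsilon>0$, pick $N$ with $2^{-N}<\epsilon/2$. For $n\le N$ use the estimate $|t(F_n,U)-t(F_n,W)|\le |E(F_n)|\,d_1(U,W)$ from Theorem~\ref{section4}, and for $n>N$ use the trivial bound $|t(F_n,U)-t(F_n,W)|\le 1$. Then
\[
\delta(U,W)\;\le\;\Big(\sum_{n\le N}\frac{|E(F_n)|}{2^n}\Big)\,d_1(U,W)\;+\;2^{-N},
\]
so (in the nondegenerate case where the displayed sum is positive) taking $d_1(U,W)<\epsilon\big/\big(2\sum_{n\le N}2^{-n}|E(F_n)|\big)$ forces $\delta(U,W)<\epsilon$, and the degenerate case is trivial. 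This shows directly that $W\mapsto\tilde W$ is $(d_1,\delta)$-continuous.

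There is essentially no obstacle here: the corollary is a formal consequence of Proposition~\ref{borelmap} together with the comparison of the $\delta$- and $\delta_1$-topologies established in the previous proposition. The one point worth flagging is the \emph{direction} of the comparison — we need that $\delta_1$-convergence implies $\delta$-convergence (equivalently, that $\delta_1$ refines $\delta$), which is exactly what ``$\delta$-closed $\Rightarrow$ $\delta_1$-closed'' encodes; the reverse implication is not needed, and indeed it fails in general since, as already noted, the two metrics on $\tilde{\mathcal{W}}^k_0$ are genuinely different despite inducing the same underlying quotient set.
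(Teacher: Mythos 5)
Your primary (composition) argument is exactly what the paper's corollary status signals: Proposition~\ref{borelmap} gives continuity of $F$ into $(\tilde{\mathcal{W}}^k_0,\delta_1)$, and the immediately preceding proposition (which shows $\delta$-closed $\Rightarrow$ $\delta_1$-closed) is precisely continuity of $\mathrm{id}\colon(\tilde{\mathcal{W}}^k_0,\delta_1)\to(\tilde{\mathcal{W}}^k_0,\delta)$. Your alternative direct estimate via $|t(F,U)-t(F,W)|\le|E(F)|\,d_1(U,W)$ is also correct, but it is essentially the same computation that the paper uses inside the proof of that preceding proposition, so it is not a genuinely different route.
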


From now on when we write $\tilde{\mathcal{W}}^k_0$ we mean $(\tilde{\mathcal{W}}^k_0,\delta_1)$.
Consider now the map $F\times id:\mathcal{W}^k_0\times \tilde{\mathcal{W}}^k_0 \to \tilde{\mathcal{W}}^k_0\times \tilde{\mathcal{W}}^k_0$ defined as $F\times id(W,\tilde{U})=(\tilde{W},\tilde{U})$.
It follows from Proposition\ref{borelmap} that this map is Borel.
Therefore the sets
$$A_n=\{(W,\tilde{U})\in \mathcal{W}^k_0\times \tilde{\mathcal{W}}^k_0:\delta_1(\tilde{W},\tilde{U})<\frac{1}{2^n}\}$$
are Borel.
We have $graph(F)=\bigcap_{n<\omega}A_n$.
Pick some countable dense subset $\{\tilde{V}_l\}_{l<\omega}\subseteq \tilde{\mathcal{W}}^k_0$ and choose also $\{V_l\}_{l<\omega}\subseteq \mathcal{W}^k_0$ its concrete representation.
Define a sequence of maps $f_n:\tilde{\mathcal{W}}^k_0\to \mathcal{W}^k_0$ where
$f_n(\tilde{W})=V_l$ where $l$ is the minimal index such that $(V_l,\tilde{W})\in A_n$.
These maps are clearly Borel.

\begin{theorem}\label{selector}
There is a Borel map $S:(\tilde{\mathcal{W}}^k_0,\delta_1)\to (\mathcal{W}^k_0,d_1)$, where $S(\tilde{W})\in \tilde{W}$.
\end{theorem}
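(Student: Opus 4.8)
The plan is to construct, by recursion on $n$, Borel maps $S_n\colon\tilde{\mathcal{W}}^k_0\to\mathcal{W}^k_0$ satisfying, for every $\tilde W\in\tilde{\mathcal{W}}^k_0$,
\[
\text{(i)}\quad\delta_1\bigl(F(S_n(\tilde W)),\tilde W\bigr)<2^{-n},
\qquad
\text{(ii)}\quad d_1\bigl(S_{n+1}(\tilde W),S_n(\tilde W)\bigr)<2^{1-n}.
\]
Granting (i) and (ii), fix $\tilde W$: by (ii) and $\sum_n 2^{1-n}<\infty$ the sequence $\{S_n(\tilde W)\}_{n<\omega}$ is $d_1$-Cauchy, so by completeness of $(\mathcal{W}^k_0,d_1)$ it has a limit, which we call $S(\tilde W)$. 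Since each $S_n$ is Borel and a pointwise limit of Borel maps into a Polish space is Borel, $S\colon\tilde{\mathcal{W}}^k_0\to\mathcal{W}^k_0$ is Borel. Finally $F$ is continuous by Proposition~\ref{borelmap}, so $F(S(\tilde W))=\lim_n F(S_n(\tilde W))$; by (i) we have $\delta_1(F(S_n(\tilde W)),\tilde W)\to 0$, hence $F(S(\tilde W))=\tilde W$, i.e.\ $S(\tilde W)\in\tilde W$, which is exactly the statement of Theorem~\ref{selector}.

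For the recursion I start with $S_0:=f_0$; condition (i) for $n=0$ is the defining property of $f_0$, and (ii) for $n=0$ will be automatic since $d_1\le 1<2$. Assume $S_n$ has been defined. Fix $\tilde W$ and let $l=l_{n+1}(\tilde W)$ be the least index with $\delta_1(\tilde V_l,\tilde W)<2^{-(n+1)}$, so that $f_{n+1}(\tilde W)=V_l$. Since $\delta_1$ is a pseudometric, using (i) we get
\[
\delta_1\bigl(S_n(\tilde W),V_l\bigr)\le\delta_1\bigl(F(S_n(\tilde W)),\tilde W\bigr)+\delta_1\bigl(\tilde W,\tilde V_l\bigr)<2^{-n}+2^{-(n+1)}.
\]
By the last item of Theorem~\ref{section4} there is $g\in G$ with $d_1(S_n(\tilde W),gV_l)\le\delta_1(S_n(\tilde W),V_l)+2^{-(n+1)}<2^{1-n}$; and since $g\mapsto gV_l$ is $d_1$-continuous (recall that the $G$-action on $(\mathcal{W}^k_0,d_1)$ is continuous) and $\{g_m\}_{m<\omega}$ is dense in $G$, we may replace $g$ by some $g_m$, keeping the strict inequality. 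Let $m(\tilde W)$ be the least such $m$ and set $S_{n+1}(\tilde W):=g_{m(\tilde W)}V_l$. Then (ii) holds by construction. Moreover $g_{m(\tilde W)}V_l=V_l^{\,g_{m(\tilde W)}^{-1}}$ is the pull-back of $V_l$ under a structure preserving map, so $\delta_w(V_l,g_{m(\tilde W)}V_l)=0$ and hence $\delta_1(V_l,g_{m(\tilde W)}V_l)=0$ by Theorem~\ref{section4}; therefore $F(S_{n+1}(\tilde W))=\tilde V_l$ and $\delta_1(F(S_{n+1}(\tilde W)),\tilde W)=\delta_1(\tilde V_l,\tilde W)<2^{-(n+1)}$, which is (i) for $n+1$.

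It remains to check that $S_{n+1}$ is Borel, assuming $S_n$ is. The map $\tilde W\mapsto l_{n+1}(\tilde W)$ is Borel because $f_{n+1}$ is, hence so is $\tilde W\mapsto V_{l_{n+1}(\tilde W)}$. For fixed $m,l<\omega$ the set
\[
E_{m,l}=\bigl\{\tilde W : l_{n+1}(\tilde W)=l \ \text{ and } \ d_1\bigl(S_n(\tilde W),g_mV_l\bigr)<2^{1-n}\bigr\}
\]
is Borel, being the intersection of the Borel set $\{\tilde W : l_{n+1}(\tilde W)=l\}$ with the preimage under the Borel map $\tilde W\mapsto S_n(\tilde W)$ of the open set $\{U : d_1(U,g_mV_l)<2^{1-n}\}$. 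For each $\tilde W$, $m(\tilde W)$ is the least $m$ with $\tilde W\in E_{m,l_{n+1}(\tilde W)}$, and this selection over the countable set $\omega$ is Borel; consequently $\tilde W\mapsto S_{n+1}(\tilde W)=g_{m(\tilde W)}V_{l_{n+1}(\tilde W)}$ is Borel. This closes the recursion and finishes the argument.

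The step I expect to be the real obstacle is (ii). Closeness of $F(S_n(\tilde W))$ to $\tilde W$ in $\delta_1$ does not by itself hand us a member of the class $\tilde W$ that is $d_1$-close to $S_n(\tilde W)$ — this is precisely why one cannot just take $S_n=f_n$ and why the construction has to be recursive. One must \emph{realize} the small $\delta_1$-distance by an honest element of $G$ that carries $f_{n+1}(\tilde W)$ to within $d_1$-distance $O(2^{-n})$ of the previous approximation $S_n(\tilde W)$; the fact that a single fixed countable dense set $\{g_m\}\subseteq G$ suffices for this (by continuity of the action) is what keeps the whole process Borel.
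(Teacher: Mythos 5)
Your proposal is correct and follows essentially the same strategy as the paper: approximate a representative of $\tilde W$ by a $d_1$-Cauchy sequence of Borel maps $S_n$, where each $S_n(\tilde W)$ is a group element (from the countable dense $\{g_m\}$) applied to $f_n(\tilde W)$, and then pass to the pointwise limit. The only organizational difference is that the paper defines the correcting group elements $h_n(\tilde W)$ directly (depending only on $f_n,f_{n+1}$, with $S_n=\prod_{i<n}h_i\cdot f_n$ telescoping), whereas you define $S_{n+1}$ recursively from $S_n$ by choosing the least $g_m$ that brings $f_{n+1}(\tilde W)$ within $d_1$-distance $2^{1-n}$ of $S_n(\tilde W)$. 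Your version is in fact slightly more careful with the constants: the triangle inequality only gives $\delta_1(f_n(\tilde W),f_{n+1}(\tilde W))<2^{-n}+2^{-(n+1)}$, so the paper's requirement $d_1(f_n(\tilde W),g_p f_{n+1}(\tilde W))<2^{-n}$ on $h_n$ need not be attainable as literally stated (one should relax it to, say, $2^{1-n}$); your bound $2^{1-n}$ is the right one and makes the existence of the approximating $g_m$ transparent. The Borelness argument via the sets $E_{m,l}$ is also fine and matches the paper's use of the countable dense $\{g_m\}$ to keep the selection Borel.
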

\begin{proof}
Let $h_n:\tilde{\mathcal{W}}^k_0\to \{g_m\}_{m<\omega}$ be defined as $h_n(\tilde{W})=g_p$ where $p$ is the smallest index such that $d_1(f_n(\tilde{W}),g_pf_{n+1}(\tilde{W}))<\frac{1}{2^n}$.
Define a sequence of maps $S_n:\tilde{\mathcal{W}}^k_0\to \mathcal{W}^k_0$ where
$$S_n(\tilde{W})=\prod_{i<n}h_i(\tilde{W})f_n(\tilde{W}).$$
Since for a fixed $g\in G$ is the map $g.\_:\mathcal{W}^k_0\to\mathcal{W}^k_0$ an isometry with respect to $d_1$ we have
$$d_1(S_n(\tilde{W}),S_{n+1}(\tilde{W}))=d_1(f_n(\tilde{W}),h_n(\tilde{W})f_{n+1}(\tilde{W}))<\frac{1}{2^n}$$
and therefore $\{S_n(\tilde{W})\}_{n<\omega}$ is a Cauchy sequence in $d_1$ for each $\tilde{W}$.
Moreover $S_n$ is Borel for each $n<\omega$ and $\delta_1(\lim_{n\to \infty}S_n(\tilde{W}),W)=0$ for each $W\in \mathcal{W}^k_0$.
Finally we can put
$$S(\tilde{W})=\lim_{n\to\infty}S_n(\tilde{W})$$
which is Borel because it is a pointwise limit of a Borel functions and has the desired properties.
\end{proof}

\begin{corollary}\label{transversal}
There is a Borel transversal $T\subseteq \mathcal{W}^k_0$ for the equivalence given by the zero distance in the pseudometric $\delta$.
\end{corollary}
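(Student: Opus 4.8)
\section*{Proof proposal for Corollary \ref{transversal}}

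The plan is to take $T$ to be the range of the Borel selector $S$ produced in Theorem \ref{selector}, that is,
$T=S[\tilde{\mathcal{W}}^k_0]=\{S(\tilde{W}):\tilde{W}\in\tilde{\mathcal{W}}^k_0\}\subseteq\mathcal{W}^k_0$,
and to check two things: that $T$ is a transversal for the relation ``$\delta(U,V)=0$'', and that $T$ is Borel in $(\mathcal{W}^k_0,d_1)$. The transversal property is essentially formal. By the Proposition comparing $\delta$, $\delta_1$ and $\delta_w$, the relation $\delta(U,V)=0$ has exactly the same classes as $\delta_1(U,V)=0$, namely the fibres $\tilde{W}$ of the quotient map $F$. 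Since $S(\tilde{W})\in\tilde{W}$ for every $\tilde{W}$, the set $T$ meets every class; and if $S(\tilde{W}_1)$ and $S(\tilde{W}_2)$ lie in the same class then $\tilde{W}_1=[S(\tilde{W}_1)]=[S(\tilde{W}_2)]=\tilde{W}_2$, so $T$ meets every class in exactly one point. Equivalently, $W\in T$ if and only if $S(F(W))=W$.

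The substantive point is Borelness of $T$, and here I would exhibit $T=\{W\in\mathcal{W}^k_0:S(F(W))=W\}$ as the equalizer of two Borel maps into a metrizable space. The map $F:(\mathcal{W}^k_0,d_1)\to(\tilde{\mathcal{W}}^k_0,\delta_1)$ is continuous by Proposition \ref{borelmap}, and $S:(\tilde{\mathcal{W}}^k_0,\delta_1)\to(\mathcal{W}^k_0,d_1)$ is Borel by Theorem \ref{selector}; since the preimage of a Borel set under a continuous map is Borel, the composition $S\circ F:(\mathcal{W}^k_0,d_1)\to(\mathcal{W}^k_0,d_1)$ is Borel. Hence $W\mapsto (W,S(F(W)))$ is a Borel map $\mathcal{W}^k_0\to\mathcal{W}^k_0\times\mathcal{W}^k_0$, and $T$ is the preimage under it of the diagonal $\Delta=\{(U,V):d_1(U,V)=0\}$, which is closed because $d_1$ is a genuine metric on $\mathcal{W}^k_0$. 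A Borel preimage of a closed set is Borel, so $T$ is Borel, completing the proof.

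I do not expect any real obstacle: the corollary is a direct consequence of Theorem \ref{selector}. The only point that needs care is that every Borelness claim is read off with respect to the correct topologies, $d_1$ on $\mathcal{W}^k_0$ and $\delta_1$ on $\tilde{\mathcal{W}}^k_0$, which is precisely where Proposition \ref{borelmap} enters. As an alternative route one could note that $F$ is a continuous surjection of the separable space $(\mathcal{W}^k_0,d_1)$, so $(\tilde{\mathcal{W}}^k_0,\delta_1)$ is separable, and it is complete by the Proposition preceding Section 2, hence Polish; then $S$ is an injective Borel map on a Polish space, so by the Lusin--Souslin theorem its image $T$ is Borel. I would present the equalizer argument as the main one since it avoids invoking that $(\tilde{\mathcal{W}}^k_0,\delta_1)$ is standard Borel.
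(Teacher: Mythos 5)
Your proof is correct, and your primary argument takes a genuinely different route from the paper's. The paper observes that $S$ is an injective Borel map from the Polish space $(\tilde{\mathcal{W}}^k_0,\delta_1)$ (Polishness having been established: completeness from the proposition before Section~2, separability because $F$ is a continuous surjection from the separable space $(\mathcal{W}^k_0,d_1)$) into the Polish space $(\mathcal{W}^k_0,d_1)$, and then invokes the Lusin--Souslin theorem to conclude that the image $T=S(\tilde{\mathcal{W}}^k_0)$ is Borel; that is precisely the argument you relegate to your ``alternative route.'' Your main argument instead identifies $T$ with the fixed-point set $\{W:S(F(W))=W\}$ of $S\circ F$ (a point you verify carefully and which does need the step $W=S(\tilde U)\Rightarrow F(W)=\tilde U$, using $S(\tilde U)\in\tilde U$), and then writes $T$ as the preimage of the closed diagonal of $\mathcal{W}^k_0\times\mathcal{W}^k_0$ under the Borel map $W\mapsto(W,S(F(W)))$. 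This is more elementary: it needs only that $d_1$ is a metric (so the diagonal is closed) and that $\mathcal{W}^k_0$ is second countable (so Borelness of the pair map follows from Borelness of the coordinates), and it bypasses any appeal to Lusin--Souslin and to $(\tilde{\mathcal{W}}^k_0,\delta_1)$ being standard Borel. The paper's route is shorter to state given the machinery already on the table, but yours is self-contained and is what one would reach for if one did not yet know (or did not want to invoke) that the quotient is a standard Borel space. Both approaches agree on the remaining, easier points: that $T$ meets each class exactly once, and that the zero-distance equivalences for $\delta$ and $\delta_1$ coincide by Theorem~\ref{section4}, which you handle correctly.
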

\begin{proof}
Since the selector $S$ is a Borel function which is injective we have that $T=S(\tilde{\mathcal{W}}^k_0)$ is Borel in $\mathcal{W}^k_0$ by a classical Suslin Theorem.
From the definition of $S$ it follows that $T$ is a Borel transversal for the equivalence given by the zero distance in $\delta_1$.
By Theorem\ref{section4} the equivalence given by the zero distance is the same for $\delta$ and $\delta_1$.
\end{proof}

\begin{corollary}
There is a Borel map $S:(\tilde{\mathcal{W}}^k_0,\delta)\to (\mathcal{W}^k_0,d_1)$, where $S(\tilde{W})\in \tilde{W}$.
Moreover $\operatorname{Borel}(\tilde{\mathcal{W}}^k_0,\delta)=\operatorname{Borel}(\tilde{\mathcal{W}}^k_0,\delta_1)$.
\end{corollary}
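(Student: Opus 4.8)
The plan is to reduce both assertions to the single statement that the topologies induced by $\delta$ and by $\delta_1$ on $\tilde{\mathcal{W}}^k_0$ generate the same Borel $\sigma$-algebra. Granting this, the first part is immediate: the map $S$ constructed in Theorem~\ref{selector} is Borel as a map $(\tilde{\mathcal{W}}^k_0,\delta_1)\to(\mathcal{W}^k_0,d_1)$ and satisfies $S(\tilde W)\in\tilde W$, and whether a map is Borel depends only on the Borel $\sigma$-algebra of its domain; hence the very same $S$ is Borel as a map $(\tilde{\mathcal{W}}^k_0,\delta)\to(\mathcal{W}^k_0,d_1)$.

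To obtain $\operatorname{Borel}(\tilde{\mathcal{W}}^k_0,\delta)=\operatorname{Borel}(\tilde{\mathcal{W}}^k_0,\delta_1)$, I would first note that the $\delta_1$-topology refines the $\delta$-topology on $\tilde{\mathcal{W}}^k_0$. This is precisely what is shown in the proof of the proposition preceding Section~2: a $\delta$-closed set is automatically $\delta_1$-closed, because $d_1$-convergence of representatives forces convergence of every homomorphism density $t(F,\cdot)$ and thus $\delta$-convergence. Consequently the identity is a continuous bijection $(\tilde{\mathcal{W}}^k_0,\delta_1)\to(\tilde{\mathcal{W}}^k_0,\delta)$.

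Next I would verify that both spaces are Polish. The space $(\tilde{\mathcal{W}}^k_0,\delta)$ is compact metric by the theorem of Elek and Szegedy quoted above, hence Polish. The space $(\tilde{\mathcal{W}}^k_0,\delta_1)$ is complete by the proposition preceding Section~2 and is separable since it is the image of the separable space $(\mathcal{W}^k_0,d_1)$ under the continuous surjection $F$ of Proposition~\ref{borelmap}; therefore it is Polish as well. Now the identity map above is a continuous bijection between Polish spaces, so by the classical Lusin--Souslin theorem (the same tool used in Corollary~\ref{transversal}) it sends Borel sets to Borel sets, giving $\operatorname{Borel}(\tilde{\mathcal{W}}^k_0,\delta_1)\subseteq\operatorname{Borel}(\tilde{\mathcal{W}}^k_0,\delta)$. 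Combined with the reverse inclusion already proved in the proposition preceding Section~2, this yields the desired equality, and with it the first claim as explained above.

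The only point requiring any care is the appeal to Lusin--Souslin, which is legitimate precisely because we have identified a Polish topology (namely the one induced by $\delta_1$) that is finer than the compact topology induced by $\delta$; completeness of $\delta_1$ alone would not suffice, which is why the separability of $(\tilde{\mathcal{W}}^k_0,\delta_1)$ — coming for free from $F$ being a continuous surjection of a separable space — should be recorded explicitly. Everything else is bookkeeping with facts already established.
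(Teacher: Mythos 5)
Your proof is correct, and it takes a genuinely different and cleaner route than the paper's. The paper derives the \emph{moreover} part via the selector machinery: it shows that $graph^{-1}(S) = (T\times\tilde{\mathcal{W}}^k_0)\cap graph(F)$ is Borel for both product topologies $d_1\times\delta_1$ and $d_1\times\delta$ (using Proposition~\ref{borelmap}, Corollary~\ref{borelmap2}, and the transversal $T$ from Corollary~\ref{transversal}), and then applies the fact that a Borel subset of a product of Polish spaces with at most one point in each section projects to a Borel set. Your argument sidesteps the transversal and the graph of $S$ entirely: you record that $(\tilde{\mathcal{W}}^k_0,\delta_1)$ is Polish --- completeness is the proposition preceding Section~2, and separability follows from $F$ being a continuous surjection from the Polish space $(\mathcal{W}^k_0,d_1)$, a small point the paper never makes explicit --- while $(\tilde{\mathcal{W}}^k_0,\delta)$ is compact Polish by Elek--Szegedy, so the identity is a continuous bijection between Polish spaces and Lusin--Souslin applies directly. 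Both proofs ultimately rest on the same descriptive-set-theoretic principle (injective Borel images of Borel subsets of Polish spaces are Borel), but you apply it to the identity map rather than to $graph^{-1}(S)$, which decouples the equality of Borel $\sigma$-algebras from Theorem~\ref{selector} and Corollary~\ref{transversal}; in principle your version of the \emph{moreover} part could be proved before the selector is even constructed. The one step to be sure to state, which you do, is that continuity of the identity $(\tilde{\mathcal{W}}^k_0,\delta_1)\to(\tilde{\mathcal{W}}^k_0,\delta)$ is exactly the content of the already-proved fact that $\delta$-closed sets are $\delta_1$-closed.
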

\begin{proof}
It is enough to show the moreover part since then the map $S$ from Theorem\ref{selector} remains Borel for the $\delta$ metric.

By Proposition\ref{borelmap} and Corollary\ref{borelmap2} the $graph(F)$ is a Borel subset of $\mathcal{W}^k_0\times \tilde{\mathcal{W}}^k_0$ for $d_1\times\delta_1$ and $d_1\times\delta$ and so is the set $R=T\times \tilde{\mathcal{W}}^k_0$ where $T$ is the transversal from Corollary\ref{transversal}.
Therefore $R\cap graph(F)=graph^{-1}(S)$ is Borel for both $d_1\times \delta_1$ and $d_1\times \delta$.
Now take $A\subseteq \tilde{\mathcal{W}}^k_0$ Borel in $\delta_1$.
Then $B=F^{-1}(A)\cap T$ is Borel in $\mathcal{W}^k_0$.
Since $D=B\times \tilde{\mathcal{W}}^k_0\cap graph^{-1}(S)$ is Borel also for $d_1\times \delta$ and each horizontal section consists of at most one point we have that the projection of $D$ on the second coordinate is Borel for $\delta$ and is equal to $A$.
\end{proof}

We showed that even though the topology on $\tilde{\mathcal{W}}^k_0$ given by $\delta_1$ is stronger than the compact topology given by $\delta$ their Borel structures are the same.
A similar result holds for $\mathcal{W}^k_0$ and all usually used topologies i.e. $||.||_{\infty}$, $||.||_{1}$ or the $weak^*$ topology.
Since the Borel structures are the same the map $S$ remains Borel for each of them. 
It is however not clear whether there exist such a selector $S$ that is moreover continuous with respect to any mentioned topologies.
This remains open even for the case of graphons $\mathcal{W}^2_0$ where the compact topology is given by the cut distance $\delta_\Box$.

\begin{question}[Sourav Chatterjee \cite{NPS}]
Is there a continuous selector $S:(\tilde{\mathcal{W}}^2_0,\delta_\Box)\to (\mathcal{W}^2_0,weak^*)$?
\end{question}

Note also that sometimes it is more convenient to consider $k$-uniform hypergraphons not as measurable symmetric subsets of $[0,1]^{r([k])}$ but as a symmetric measurable functions $W:[0,1]^{r_<([k])}\to [0,1]$ with the $d_1$ metric (also from $||.||_1$ norm).
The natural quotient map $Q$ from the former set to the other is defined as 
$$Q(W)(x)=\int_{[0,1]} W(x,x_{[k]})dx_{[k]}$$
where $x$ is the vector index by $r_<([k])$ coordinates and $x_{[k]}$ is the last coordinate.
This map is continuous and so if we want our selector to choose rather from the latter set it is enough to compose it with $Q$.\\

%\paragraph{\textbf{Acknowledgments.}}
%I would like to thank J.Hladký for bringing my attention to this problem and M.Doležal for reading a preliminary version of this manuscript and for pointing out that the Arsenin-Kunugui selection theorem is not needed in the proof.

\end{document}